\newcommand{\Cartesian}[2]{#1\,\square\,#2}
\newcommand{\fle}{x}
\newcommand{\sle}{y}
\newcommand{\tle}{z}
\newcommand{\fol}{x'}
\newcommand{\fwo}{w}
\newtheorem{defn}{Definition}
\newtheorem{thm}[defn]{Theorem}
\newtheorem{lem}[defn]{Lemma}
\begin{document}

\title{The $k$-cube is $k$-representable}
\runningtitle{The $k$-cube is $k$-representable}
\runningauthors{\textsc{B.~Broere}, \textsc{H.~Zantema}}

  \author[RU]{Bas Broere}
\address[RU]{Radboud University Nijmegen, P.O.\ Box 9010, \\ 6500 GL Nijmegen, The Netherlands
  \\
  \email{broerebas@gmail.com}}
\author[TUE,RU]{Hans Zantema}
\address[TUE]{Department of Computer Science, TU Eindhoven, P.O.\ Box 513,\\
		5600 MB Eindhoven, The Netherlands
		\\
		\email{h.zantema@tue.nl}}


\maketitle

\begin{abstract}
A graph is called $k$-representable if there exists a word $\fwo$ over the nodes of the graph, each node occurring exactly $k$ times, such that there is an edge between two nodes $\fle,\sle$ if and only after removing all letters distinct from $\fle,\sle$, from $\fwo$, a word remains in which $\fle,\sle$ alternate.
We prove that if $G$ is $k$-representable for $k>1$, then the Cartesian product of $G$ and the complete graph on $n$ nodes is $(k+n-1)$-representable.
As a direct consequence, the $k$-cube is $k$-representable for every $k \geq 1$.

Our main technique consists of exploring occurrence based functions that replace every $i$th occurrence of a symbol $\fle$ in a word $\fwo$ by a string $h(\fle,i)$.
The representing word we construct to achieve our main theorem is purely composed from concatenation and occurrence based functions.
\end{abstract}

\section{Introduction}
For a word $\fwo$ over an alphabet $A$, the two letters $\fle$ and $\sle$ are said to \emph{alternate} in $\fwo$ if between every two $\fle$'s in $\fwo$ a $\sle$ occurs and between every two $\sle$'s in $\fwo$ an $\fle$ occurs. Stated otherwise: deleting all letters but $\fle$ and $\sle$ from $\fwo$ results in a word $\fle\sle\fle\sle\hdots$ or $\sle\fle\sle\fle\hdots$ of even or odd length.

A graph $G=(V,E)$ is \emph{word-representable} if there is a word $\fwo$ over the alphabet $V$, such that $(\fle,\sle)\in E$ if and only if $\fle$ and $\sle$ alternate in $\fwo$.
The word $\fwo$ is said to \emph{represent}, or be a \emph{representant} of, $G$.
A word only represents one graph, while a graph can have multiple words representing it.

A lot of work has been done on investigating which graphs are word-representable; this is a main topic of the book \cite{book:kitloz}. More recent work includes \cite{art:rep-cro,art:132-rep,art:pat-avo,art:nea-tri}.
A first basic observation is that one may restrict to \emph{uniform} words: a word $\fwo$ over an alphabet $A$ is called \emph{uniform} if there exists a number $k$ such that every letter in $A$ occurs exactly $k$ times in $\fwo$.
For such $k$, the word $\fwo$ is called \emph{$k$-uniform}. So the basic observation states
\begin{thm}(\cite{art:kitpya})
A graph $G$ is representable if and only if it is representable by a $k$-uniform word for some $k\ge 1$.
\end{thm}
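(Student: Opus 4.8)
The plan is to prove the two implications separately. One direction is immediate: a $k$-uniform representant is in particular a representant, so if $G$ is represented by a $k$-uniform word it is representable. The content is the converse, and for this it suffices to show that any representant $\fwo$ of $G$ can be turned into a uniform one representing the same graph. I would isolate this as a \emph{uniformization} step and attack it by a sequence of \emph{alternation-preserving insertions}: operations that add occurrences of letters to $\fwo$ without changing, for any pair $\fle,\sle$, whether $\fle$ and $\sle$ alternate.

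The key tool I would establish first is a single clean insertion lemma: if $p$ denotes the subword of $\fwo$ obtained by keeping only the first occurrence of each letter (so $p$ is a permutation of the alphabet listing the letters in order of first appearance), then $p\fwo$ represents the same graph as $\fwo$, with the multiplicity of every letter raised by one. To verify this I would fix a pair $\fle,\sle$ and examine the subword on $\{\fle,\sle\}$, distinguishing only two cases. If $\fle,\sle$ alternate in $\fwo$, the $\{\fle,\sle\}$-subword of $\fwo$ begins with whichever of the two appears first, and $p$ contributes exactly that letter followed by the other, so $p\fwo$ prepends the two-letter block in the matching order and the alternation is preserved; if $\fle,\sle$ do not alternate, some two equal letters are already consecutive in the $\{\fle,\sle\}$-subword of $\fwo$, and this witness survives in $p\fwo$, so non-alternation is preserved. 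This lemma already shows that the uniformly representable graphs are closed under increasing $k$ (it sends $k$-uniform words to $(k+1)$-uniform ones), and, more importantly, it identifies the safe mechanism: inserting letters at the boundary, in first-occurrence order.

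To finish I would induct on the total deficiency $\sum_{\fle}(k-m_{\fle})$, where $m_{\fle}$ is the multiplicity of $\fle$ and $k$ is the maximum multiplicity, using restricted versions of the lemma that prepend (or, symmetrically, append) only a carefully ordered block of deficient letters rather than the whole permutation. The two constraints such an insertion must respect are that it may not place two copies of a letter consecutively within any $\{\fle,\sle\}$-subword (which would destroy an edge $\fle\sle$) and that it may not separate two equal letters that currently witness a non-edge (which would create a spurious edge). I expect the main obstacle to be exactly this bookkeeping: showing that at each stage one can choose a nonempty, order-closed block of still-deficient letters whose boundary insertion satisfies both constraints and hence strictly decreases the deficiency while leaving the represented graph fixed. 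Once such a block is available at every step, iterating drives the deficiency to zero and yields a $k$-uniform representant of $G$, completing the proof.
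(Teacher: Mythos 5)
Note first that the paper does not prove this statement at all: it is imported from \cite{art:kitpya} as a known result, so there is no in-paper argument to compare against, and your proposal must be judged on its own. On its own terms, the outline is sound but has one genuine gap, which you yourself flag rather than close: the induction step is never established. Your easy direction is fine, and your insertion lemma is correct as stated --- prepending the initial permutation $p$ preserves the represented graph, since for an alternating pair the restriction of $p$ to $\{x,y\}$ matches the starting order of $w_{\{x,y\}}$, and for a non-alternating pair the square factor $xx$ or $yy$ survives boundary insertion. But this lemma raises \emph{every} multiplicity by one, so it never reduces the deficiency $\sum_x(k-m_x)$, and the step that would --- prepending a block of only the deficient letters --- is exactly what you leave as ``I expect the main obstacle to be this bookkeeping.'' As written, the existence of a safe block at every stage is asserted, not proved, and that is the entire content of the uniformization.

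The missing idea is a small counting observation, after which no careful choice of block is needed: prepending the word listing \emph{all} currently deficient letters, in order of first occurrence in $w$, is always safe. First, boundary insertion can never separate two adjacent equal letters inside a restriction $w_{\{x,y\}}$, so non-edges are preserved automatically; your second feared constraint violation cannot occur. Second, if $x$ and $y$ alternate in $w$ then $|m_x-m_y|\le 1$, and if $m_x<m_y$ then $w_{\{x,y\}}$ must begin (and end) with $y$. Hence: if only $x$ is deficient, then $m_x=k-1$, the restriction starts with $y$, and prepending $x$ alone yields $xyxy\cdots y$, still alternating; if both are deficient with $m_x=m_y$, the restriction starts with whichever letter occurs first in $w$, which is precisely the order in which the block lists them; if both are deficient with $m_x=m_y+1$, the restriction starts with $x$, whose first occurrence is then earlier, so the block again prepends $xy$ in the matching order. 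In every case alternation is preserved and each deficient letter gains exactly one occurrence, so iterating at most $k-\min_x m_x$ times yields a $k$-uniform representant. With these two observations your induction goes through --- indeed more simply than you anticipated, since the ``carefully ordered block'' is always the full set of deficient letters in first-occurrence order.
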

In this case $G$ is called $k$-representable. The minimum $k$ such that there exists a $k$-uniform word representing a graph $G$ is called the \emph{representation number} of the graph $G$; for a non-word-representable graph its representation number is defined to be $\infty$.

Now it is a natural question which representation numbers occur, and what are the representation numbers of particular graphs. Lots of results in this direction are given in the book \cite{book:kitloz}. A graph of particular interest is the $k$-cube $Q_k$, so an obvious question is to establish the representation number of $Q_k$. The nodes of the $k$-cube $Q_k$
are the $2^k$ Boolean vectors of length $k$, and two such nodes are connected by an edge if and only if they differ in exactly one position. Equivalently, we can define inductively
$Q_1 =K_2$ consisting of two nodes connected by an edge, and for $k>1$ the $k$-cube $Q_k$ is defined to be the Cartesian product of $K_2$ and $(k-1)$-cube. The Cartesian product we present in more detail in Section \ref{secCp}. In this paper we answer this question in one direction: we show that the representation number of the $k$-cube is at most $k$ by constructing a $k$-uniform word for which show that it represents the $k$-cube. In fact it is an instance of a more general construction: for every graph $G$ represented by a $k$-uniform word, we construct a $(k+1)$-uniform word representing the Cartesian product of $G$ and $K_2$.

After having done this we generalize this further: for $K_n$ being the complete graph on $n>1$ nodes, for every graph $G$ represented by a $k$-uniform word, we construct a $(k+n-1)$-uniform word representing the Cartesian product of $G$ and $K_n$.

Our constructions are based {\em occurrence based functions}: functions $h$ on $k$-uniform words in which every $i$th occurrence of a letter $\fle$ is mapped to a fixed string $h(\fle,i)$. All our constructions are just concatenations of occurrence based functions.

The paper is organized as follows. First, in Section \ref{secprel} we give some basic notations and preliminaries on occurrence-based functions.
In Section \ref{secCp} we define Cartesian products and present the main result on taking the product with $K_2$.
In Section \ref{seccubespr} we discuss its consequences for cubes and prisms. Next,
in Section \ref{secext} we extend our main result to the product with $K_n$ for arbitrary $n$. We conclude in Section \ref{secconcl}.

\section{Preliminaries}
\label{secprel}
In this section we collect some preliminaries, in particular on our notion of occurrence-based functions, and some convenient notations.

\begin{defn}(\cite{book:kitloz})
If $\fwo$ is a word over an alphabet $A$, and $B \subseteq A$, then the word $\fwo_B$ is defined to be obtained by removing all letters in $A\setminus B$ from $\fwo$.
\end{defn}

So two letters $\fle$ and $\sle$ alternate in $\fwo$ if and only if $\fwo_{\{\fle,\sle\}}$ is either $\fle\sle\fle\sle\hdots$ or $\sle\fle\sle\fle\hdots$,
and two letters $\fle$ and $\sle$ alternate in a $k$-uniform word $\fwo$ if and only if $\fwo_{\{\fle,\sle\}}$ is either $(\fle\sle)^k$ or $(\sle\fle)^k$.

We now introduce the notion of occurrence-based functions.
These functions appear to be very useful in constructing representants for word-representable graphs.

\begin{defn}
Let $V$ and $V'$ be (possibly different) alphabets, and let $N_k=\{1,\hdots,k\}$.

The \emph{labelling function} of a word over $V$ is defined as $H:(V^n)^* \rightarrow (V\times N_k)^*$, where the $i$th occurrence of each letter $\fle$ is mapped to the pair $(\fle,i)$, and $k$ satisfies the property that every symbol occurs at most $k$ times in $\fwo$.
The word $H(\fwo)$ is called the \emph{labelled version of $\fwo$}.

An \emph{occurrence-based function} is defined as applying a string homomorphism $h : V\times N_k \rightarrow (V')^*$ to an already labelled version of a word.
As a shorthand we will write $h(\fwo)$ instead of $h(H(\fwo))$.
\end{defn}

\begin{defn}
For a $k$-uniform word $w$ and a non-empty set $A \subseteq N_k=\{1,\hdots,k\}$ the occurrence based function $p_A$ is defined by $p_A(\fle, i) = \fle$ for all $i \in A$,
and $p_A(\fle,i)= \epsilon$ for all $i \not\in A$, for every symbol $\fle$.
\end{defn}

For instance, $p_{\{i\}}(\fwo)$ is obtained by removing all but the $i$th occurrence of each symbol in $\fwo$; this is called the
\emph{$i$th permutation} of a word $\fwo$.
Clearly, if $w$ is $k$-uniform, then $p_A(\fwo)$ is $\#A$-uniform, and if $w_{\{\fle,\sle\}} = (\fle\sle)^k$ then $p_A(\fwo)_{\{\fle,\sle\}} = (\fle\sle)^{\# A}$

\begin{lem}\label{lem}
Let $\fwo$ be a $k$-uniform word representing a graph $G$. For some $m > 1$ let $A_1,\ldots,A_m$ be non-empty subsets of $N_k=\{1,\hdots,k\}$
such that for all $j=1,\ldots,k-1$ there is $i \in \{1,\ldots,m\}$ such that $\{j,j+1\} \subseteq A_i$. Then the $(\sum_{i=1}^m \# A_i)$-uniform
word $\fwo' = p_{A_1}(\fwo) p_{A_2}(\fwo) \cdots p_{A_m}(\fwo)$ also represents the graph $G$.
\end{lem}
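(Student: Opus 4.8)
The plan is to show that two letters $\fle,\sle$ alternate in $\fwo'$ if and only if they alternate in $\fwo$, since alternation is exactly what determines the represented graph. Because $\fwo$ represents $G$, for any pair $\fle,\sle$ we have that $\fwo_{\{\fle,\sle\}}$ equals $(\fle\sle)^k$ or $(\sle\fle)^k$ precisely when $(\fle,\sle)\in E$, and otherwise it is some non-alternating word. The key observation, already noted in the excerpt, is that $p_A$ commutes with restriction: $p_A(\fwo)_{\{\fle,\sle\}} = p_A(\fwo_{\{\fle,\sle\}})$, because deleting letters other than $\fle,\sle$ and keeping only occurrences indexed by $A$ are operations that do not interfere. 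Hence it suffices to analyze everything at the level of the two-letter word $u = \fwo_{\{\fle,\sle\}}$, and then $\fwo'_{\{\fle,\sle\}} = p_{A_1}(u)\,p_{A_2}(u)\cdots p_{A_m}(u)$.

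First I would handle the easy direction: if $\fle,\sle$ alternate in $\fwo$, then $u=(\fle\sle)^k$ (up to swapping roles). For each $A_i$, the fact established before the lemma gives $p_{A_i}(u) = (\fle\sle)^{\#A_i}$, which is itself alternating and both starts with $\fle$ and ends with $\sle$. Concatenating such blocks preserves alternation at the seams (an $\sle$ is followed by an $\fle$), so $\fwo'_{\{\fle,\sle\}}=(\fle\sle)^{\sum \#A_i}$ and the pair alternates in $\fwo'$. The same argument applies symmetrically when $u=(\sle\fle)^k$.

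The harder direction, and where I expect the main obstacle, is proving the contrapositive: if $\fle,\sle$ do not alternate in $\fwo$, they must fail to alternate in $\fwo'$. Here the hypothesis on the $A_i$ becomes essential. Non-alternation of $u$ means there is a local violation somewhere: two consecutive same letters in $u$, say the $j$th and $(j{+}1)$th occurrence of $\fle$ with no $\sle$ between them (a block $\ldots\fle\fle\ldots$), or the symmetric situation for $\sle$. The condition guarantees some $A_i$ with $\{j,j+1\}\subseteq A_i$, and applying $p_{A_i}$ keeps both offending occurrences of $\fle$; I would argue that within $p_{A_i}(u)$ these two $\fle$'s remain consecutive among the retained letters, since $p_{A_i}$ only deletes occurrences and cannot insert a $\sle$ between them. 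This produces an $\fle\fle$ factor in $p_{A_i}(u)$, hence in $\fwo'_{\{\fle,\sle\}}$, witnessing non-alternation in $\fwo'$.

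The delicate point to get right is making ``consecutive violation'' precise in terms of occurrence indices and confirming it survives $p_{A_i}$. I would first show that a two-letter word fails to alternate if and only if it contains two cyclically-or-linearly adjacent equal letters, and index these by occurrence number; the condition $\{j,j+1\}\subseteq A_i$ is tailored exactly so that whichever adjacent pair of equal letters is retained, no separating letter of the other kind has been created. I would also need to check the boundary between consecutive blocks $p_{A_i}(u)\,p_{A_{i+1}}(u)$ cannot accidentally repair a violation that lives strictly inside a single block, which is immediate since a violation inside $p_{A_i}(u)$ is already a factor of $\fwo'_{\{\fle,\sle\}}$ regardless of the neighbouring blocks. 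Combining both directions gives that $\fwo'$ and $\fwo$ induce the same alternation relation, so $\fwo'$ represents $G$, and the uniformity count $\sum_{i=1}^m \#A_i$ follows from additivity of letter counts under concatenation.
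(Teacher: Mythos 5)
Your proposal is correct, but your key direction runs along a genuinely different track than the paper's. The easy direction (alternation in $w$ implies alternation in $w'$) is identical in both. For the other direction, the paper argues directly: assuming $x,y$ alternate in $w'$, it first notes that all blocks must carry the same orientation, i.e.\ $p_{A_i}(w)_{\{x,y\}} = (xy)^{\#A_i}$ for every $i$ (or $(yx)^{\#A_i}$ for every $i$), and then, for each $j$, extracts from a block with $\{j,j+1\}\subseteq A_i$ three ordering facts about $w$ --- the $j$th $x$ precedes the $j$th $y$, which precedes the $(j+1)$th $x$, which precedes the $(j+1)$th $y$ --- and stitches these over $j=1,\ldots,k-1$ into alternation of $w$. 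You instead prove the contrapositive by a local-defect argument: non-alternation of $w_{\{x,y\}}$ yields two adjacent equal letters, necessarily the $j$th and $(j+1)$th occurrences of one symbol; the covering set $A_i$ retains both, and since $p_{A_i}$ only deletes, the factor $xx$ (or $yy$) survives verbatim into $p_{A_i}(w)_{\{x,y\}}$ and hence into $w'_{\{x,y\}}$. Your route is more elementary: it needs no orientation-consistency step across the concatenated blocks and no assembly of global ordering constraints, only the commutation $p_A(w)_{\{x,y\}} = p_A(w_{\{x,y\}})$ (which is correct, and which the paper also uses implicitly) plus persistence of a factor under deletion. What the paper's direct form buys is an explicit reconstruction of the full interleaving of all $2k$ occurrences, making transparent exactly how the covering hypothesis forces the global order. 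One small cleanup on your side: drop the ``cyclically-or-linearly'' hedge --- a cyclic adjacency of equal letters is not in general a violation ($xyx$ alternates); for two-letter words, failure to alternate is precisely the presence of a \emph{linearly} adjacent equal pair, which is all your argument actually uses (in the $k$-uniform restriction the two notions happen to coincide, so no harm is done, but the general claim as stated is false).
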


\begin{proof}
We have to prove that any two symbols $\fle,\sle$ alternate in $\fwo$ if and only if they alternate in $\fwo'$.

First assume they alternate in $\fwo$, then $\fwo_{\{\fle,\sle\}}$ is either $(\fle \sle)^k$ or $(\sle \fle)^k$. Assume it is $(\fle \sle)^k$,
the other case is similar by swapping $\fle$ and $\sle$. Then $p_{A_i}(\fwo)_{\{\fle,\sle\}} = (\fle\sle)^{\# A_i}$ for all $i = 1,\ldots,m$, so
$\fwo'_{\{\fle,\sle\}} = (\fle\sle)^{\sum_{i=1}^m \# A_i}$, by which $\fle,\sle$ alternate in $\fwo'$.

Conversely, assume that $\fle,\sle$ alternate in $\fwo'$. Then either $p_{A_i}(\fwo)_{\{\fle,\sle\}} = (\fle\sle)^{\# A_i}$ for all $i = 1,\ldots,m$, or
$p_{A_i}(\fwo)_{\{\fle,\sle\}} = (\sle\fle)^{\# A_i}$ for all $i = 1,\ldots,m$; let's assume the first, the other case is similar by swapping $\fle$ and $\sle$.
Let $\{j,j+1\} \subseteq A_i$, then from $p_{A_i}(\fwo)_{\{\fle,\sle\}} = (\fle\sle)^{\# A_i}$ we conclude that
\begin{itemize}
\item the $j$th $\fle$ in $\fwo$ is left from the $j$th $\sle$ in $\fwo$,
\item the $j$th $\sle$ in $\fwo$ is left from the $(j+1)$th $\fle$ in $\fwo$, and
\item the $(j+1)$th $\fle$ in $\fwo$ is left from the $(j+1)$th $\sle$ in $\fwo$.
\end{itemize}
As it is assumed for all $j=1,\ldots,k-1$ there is such an $A_i$, we obtain this
 property for all $j=1,\ldots,k-1$, from which we conclude that $\fle,\sle$ alternate in $\fwo$.
\end{proof}

As a direct consequence of Lemma \ref{lem} we obtain that if $G$ is a graph represented by a $k$-uniform word $\fwo$, then for every $1\le i\le k$
the $(k+1)$-uniform word $p_{\{i\}}(\fwo)\fwo$ also represents $G$.

Now we come to the main topic of this paper: word representations of Cartesian products of graphs.

\section{Cartesian products}
\label{secCp}

\begin{defn}(\cite{book:kitloz})
The \emph{Cartesian product} of two graphs $G = (V_G,E_G)$ and $H = (V_H,E_H)$ is defined as $\Cartesian{G}{H} = (V_{GH},E_{GH})$, where $V_{GH}=V_G\times V_H$ and\\
$E_{GH} = \{\,\left(\,(\fle,\tle)\,,\,(\sle,\fol)\,\right)\,|\,\fle=\sle\text{ and } (\tle,\fol)\in E_H\text{ or }\tle=\fol\text{ and }(\fle,\sle)\in E_G \}$.
\end{defn}

It is known from \cite{book:kitloz} that if both $G$ and $H$ are word-representable, then $\Cartesian{G}{H}$ is word-representable.
However, it is not yet clear how to find a representant of this Cartesian product directly from representations for both $G$ and $H$.

In this section we will give a construction of a word representing a special case of this situation, namely $H = K_2$, the complete graph on 2 nodes.

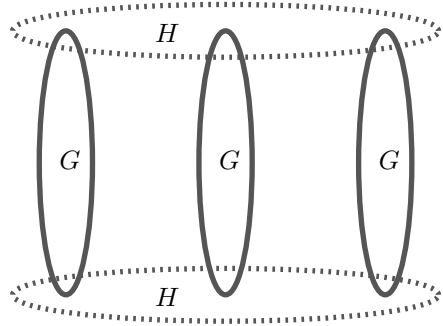
\begin{figure}[htbp!]
    \centering
    \begin{tikzpicture}[x=1cm,y=1cm,scale=0.7]
        \draw [rotate around={90:(-12,4.5)},line width=2pt,color={rgb:red,0.4;green,0.4;blue,0.4}] (-12,4.5) ellipse (2.5cm and 0.5cm);
        \draw [rotate around={90:(-9,4.5)},line width=2pt,color={rgb:red,0.4;green,0.4;blue,0.4}] (-9,4.5) ellipse (2.5cm and 0.5cm);
        \draw [rotate around={90:(-6,4.5)},line width=2pt,color={rgb:red,0.4;green,0.4;blue,0.4}] (-6,4.5) ellipse (2.5cm and 0.5cm);
        \draw [rotate around={0:(-9,2)},line width=2pt,dash pattern=on 1pt off 2pt,color={rgb:red,0.4;green,0.4;blue,0.4}] (-9,2) ellipse (4.0cm and 0.5cm);
        \draw [rotate around={0:(-9,7)},line width=2pt,dash pattern=on 1pt off 2pt,color={rgb:red,0.4;green,0.4;blue,0.4}] (-9,7) ellipse (4.0cm and 0.5cm);
        \draw (-12.3,4.9) node[anchor=north west] {$G$};
        \draw (-9.3,4.9) node[anchor=north west] {$G$};
        \draw (-6.3,4.9) node[anchor=north west] {$G$};
        \draw (-10.5,7.3) node[anchor=north west] {$H$};
        \draw (-10.5,2.3) node[anchor=north west] {$H$};
    \end{tikzpicture}
    \caption{Cartesian product of two graphs, $G$ and $H$.}
    \label{fig:my_label}
\end{figure}

When looking at the Cartesian product of a graph $G$ with the complete graph on $n>1$ nodes, $K_n$, the resulting graph consists of $n$ copies of $G$, in which moreover any two nodes corresponding to the same node in $G$ are connected by an edge.
It is also easily verified that the complete graph on $n$ nodes is represented by the 1-uniform word $\fwo=12\hdots n$.
In fact, the complete graphs are the only graphs of representation number 1, see \cite{art:kitrep3}.\\

The complete graph $K_2$ just consists of tow nodes connected by a single edge; the nodes of $\Cartesian{G}{K_2}$ are denoted by $\fle_1,\fle_2$ for $\fle$ running over the nodes of $G$; two nodes $\fle_i$, $\sle_j$ are connected by an edge in $\Cartesian{G}{K_2}$ if and only if
\begin{itemize}
\item $i=j$ and $(\fle,\sle)$ is an edge in $G$, or
\item $i\neq j$ and $\fle=\sle$.
\end{itemize}
Write $V_1$ for the set of nodes $\fle_1$ and $V_2$ for the set of nodes $\fle_2$, so $V_1 \cup V_2$ is the set of nodes of $\Cartesian{G}{K_2}$.

\begin{thm}\label{thm:main}
Let $G$ be a $k$-representable graph for $k>1$ and let $\fwo$ be a $k$-representant of $G$.
Then the graph $\Cartesian{G}{K_2}$ is $(k+1)$-representable with representant $\fwo' = f(\fwo) g(\fwo)$ for the occurrence based functions $f,g$ defined by
\[ f(\fle,i) = \left\{ \begin{array}{ll}
\fle_1 & \mbox{if $i = 1$} \\
\fle_2 \fle_1 & \mbox{if $1 < i \leq k$} \end{array} \right. \hspace{1cm}
g(\fle,i) = \left\{ \begin{array}{ll}
\fle_2 & \mbox{if $i = 1$ }\\
\fle_1 \fle_2 & \mbox{if $i = 2$} \\
\epsilon & \mbox{if $2 < i \leq k$} \end{array} \right. \]
\end{thm}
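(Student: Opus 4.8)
The plan is to fix the candidate $\fwo' = f(\fwo) g(\fwo)$ and verify, for every unordered pair of distinct nodes of $\Cartesian{G}{K_2}$, that the two nodes alternate in $\fwo'$ precisely when they are adjacent. First I would do the bookkeeping: counting the letters produced by the two occurrence-based functions shows that each $\fle_1$ occurs $k$ times in $f(\fwo)$ (once from the first occurrence of $\fle$, once more from each of the occurrences $2,\dots,k$) and once in $g(\fwo)$, while each $\fle_2$ occurs $k-1$ times in $f(\fwo)$ and twice in $g(\fwo)$; hence every node occurs $k+1$ times and $\fwo'$ is indeed $(k+1)$-uniform. The node pairs then split into four kinds: the same-node pair $\{\fle_1,\fle_2\}$, the two within-component pairs $\{\fle_1,\sle_1\}$ and $\{\fle_2,\sle_2\}$ with $\fle\neq\sle$, and the mixed pairs $\{\fle_1,\sle_2\}$ with $\fle\neq\sle$.

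For the within-component pairs I would reduce to Lemma~\ref{lem}. Projecting both factors onto $\{\fle_1,\sle_1\}$ and discarding the (uniform) subscripts, one checks that $\fwo'_{\{\fle_1,\sle_1\}}$ is the relabelling of $(p_{\{1,\dots,k\}}(\fwo)\,p_{\{2\}}(\fwo))_{\{\fle,\sle\}}$, since $f$ emits an $\fle_1$ at every occurrence of $\fle$ whereas $g$ emits a single $\fle_1$ at the second occurrence (and the prefixed $\fle_2$'s are deleted, so the relative order of the surviving letters is the order inherited from $\fwo$). As relabelling preserves alternation and $\{1,\dots,k\}$ contains every pair $\{j,j+1\}$, Lemma~\ref{lem} gives that $\fle_1,\sle_1$ alternate in $\fwo'$ iff $\fle,\sle$ alternate in $\fwo$, i.e.\ iff $(\fle,\sle)\in E_G$. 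The pair $\{\fle_2,\sle_2\}$ is analogous with the occurrence sets $\{2,\dots,k\}$ coming from $f$ and $\{1,2\}$ coming from $g$: the set $\{2,\dots,k\}$ contains every $\{j,j+1\}$ with $j\ge 2$ and the set $\{1,2\}$ supplies $\{1,2\}$, so the hypothesis of Lemma~\ref{lem} again holds. For the same-node pair a direct computation gives $\fwo'_{\{\fle_1,\fle_2\}}=\bigl(\fle_1(\fle_2\fle_1)^{k-1}\bigr)\bigl(\fle_2\fle_1\fle_2\bigr)=(\fle_1\fle_2)^{k+1}$, so $\fle_1,\fle_2$ always alternate, matching the single edge contributed by $K_2$.

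The main obstacle is the mixed family $\{\fle_1,\sle_2\}$ with $\fle\neq\sle$, which must fail to alternate even when $(\fle,\sle)\in E_G$, so the argument here cannot go through the representant of $G$. I would settle it by a counting argument followed by a positional contradiction. Restricted to $\{\fle_1,\sle_2\}$, the factor $f(\fwo)$ contributes $k$ copies of $\fle_1$ but only $k-1$ copies of $\sle_2$, while $g(\fwo)$ contributes one $\fle_1$ and two $\sle_2$. A $(k+1)$-uniform alternating word over two letters is $(\fle_1\sle_2)^{k+1}$ or $(\sle_2\fle_1)^{k+1}$; since the $f$-factor has one more $\fle_1$ than $\sle_2$ it can only be the prefix $(\fle_1\sle_2)^{k-1}\fle_1$ of the first of these, which in turn forces $f(\fwo)_{\{\fle_1,\sle_2\}}=(\fle_1\sle_2)^{k-1}\fle_1$ and $g(\fwo)_{\{\fle_1,\sle_2\}}=\sle_2\fle_1\sle_2$. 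But the leftmost $\sle_2$ in $(\fle_1\sle_2)^{k-1}\fle_1$ is the second $\sle$ of $\fwo$ and sits strictly between the first and second $\fle$, forcing the second $\sle$ to precede the second $\fle$ in $\fwo$; whereas $g(\fwo)_{\{\fle_1,\sle_2\}}=\sle_2\fle_1\sle_2$ places the single $\fle_1$ (the second $\fle$) before its trailing $\sle_2$ (the second $\sle$), forcing the second $\fle$ to precede the second $\sle$. These two conclusions are contradictory, so $\{\fle_1,\sle_2\}$ cannot alternate, and the symmetric pair $\{\fle_2,\sle_1\}$ follows by interchanging the roles of $\fle$ and $\sle$. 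Assembling the four cases shows that $\fwo'$ represents $\Cartesian{G}{K_2}$.
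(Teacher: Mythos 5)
Your proof is correct, and for three of the four cases it is essentially the paper's own argument: the same uniformity count, the same reduction of the pairs $\{\fle_1,\sle_1\}$ and $\{\fle_2,\sle_2\}$ to Lemma~\ref{lem} with $A_1=N_k,\ A_2=\{2\}$ and $A_1=N_k\setminus\{1\},\ A_2=\{1,2\}$ respectively, and the same direct computation $\fwo'_{\{\fle_1,\fle_2\}}=(\fle_1\fle_2)^{k+1}$ for the same-node pair. Where you genuinely diverge is the mixed case $\{\fle_1,\sle_2\}$ with $\fle\neq\sle$, which is the heart of the theorem. The paper argues directly, by case analysis on the shape of $\fwo_{\{\fle,\sle\}}$ --- equal to $(\fle\sle)^k$, equal to $(\sle\fle)^k$, or containing $\fle\fle$ or $\sle\sle$ (with a further split on whether it begins with $\sle\sle$) --- and in each subcase exhibits a doubled letter ($\fle_1\fle_1$ inside $f(\fwo)_{\{\fle_1,\sle_2\}}$ or $\sle_2\sle_2$ inside $g(\fwo)_{\{\fle_1,\sle_2\}}$) witnessing non-alternation. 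You instead argue by contradiction: the count of $k$ copies of $\fle_1$ against $k-1$ copies of $\sle_2$ in the $f$-factor rules out $(\sle_2\fle_1)^{k+1}$ and forces $f(\fwo)_{\{\fle_1,\sle_2\}}=(\fle_1\sle_2)^{k-1}\fle_1$ and $g(\fwo)_{\{\fle_1,\sle_2\}}=\sle_2\fle_1\sle_2$, and then the two factors yield opposite orderings of the second $\fle$ and the second $\sle$ in $\fwo$. I checked both positional deductions and they are sound: in the $f$-factor the leftmost $\sle_2$ is the second $\sle$ of $\fwo$ (the first $\sle$ contributes nothing) and lies between the first and second $\fle$, while in the $g$-factor the unique $\fle_1$ is the second $\fle$ and precedes the final $\sle_2$, which is the second $\sle$. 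Your route is arguably cleaner, needing no subcases and no reference to whether $\fle,\sle$ alternate in $\fwo$, and it uses the hypothesis $k>1$ in exactly the same place the paper does (the second occurrences must exist; for $k=1$ the mixed pair would wrongly alternate). What the paper's case analysis buys in exchange is direct reusability: the exhibited patterns $\fle_i\fle_i$ and $\sle_j\sle_j$ carry over verbatim, factor by factor, to the proof of Theorem~\ref{thm:extended} for $\Cartesian{G}{K_n}$, whereas your counting argument would have to be reworked there to account for the $n-1$ short factors.
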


\begin{proof}
For every $\fle$ the word $f(\fwo)$ contains $k$ copies of $\fle_1$ and $k-1$ copies of $\fle_2$, and  the word $g(\fwo)$ contains 1 copy of $\fle_1$ and 2 copies of $\fle_2$,
so indeed $\fwo'$ is $(k+1)$-uniform.

We have to prove that $\fle_i, \sle_j$ alternate in $\fwo'$ for $\fle_i \neq \sle_j$ if and only if $(\fle_i, \sle_j)$ is an edge in $\Cartesian{G}{K_2}$, for $i,j = 1,2$, that is
\begin{itemize}
\item if $x \neq y$ and $i=j$ then $\fle_i, \sle_j$ alternate in $\fwo'$ if and only if $\fle,\sle$ alternate in $\fwo$,
\item if $x = y$ and $i\neq j$ then $\fle_i, \sle_j$ do alternate in $\fwo'$, and
\item if $x \neq y$ and $i\neq j$ then $\fle_i, \sle_j$ do not alternate in $\fwo'$.
\end{itemize}
We do this by considering all these cases.

Let $\fle \neq \sle$ and $i=j=1$. Observe that $f(\fwo)_{V_1} = \fwo_1$ and $g(\fwo)_{V_1} = p_2(\fwo_1)$, in which $\fwo_1$ is a copy of $\fwo$ in which every symbol is indexed by 1.
Now $\fle_1,\sle_1$ alternate in $\fwo'$ if and only if they alternate in $\fwo'_{V_1} = f(\fwo)_{V_1} g(\fwo)_{V_1} =  \fwo_1 p_2(\fwo_1)$, and by Lemma \ref{lem} for $A_1 = N_k$ and $A_2 = \{2\}$, with $N_k=\{1,\hdots,k\}$, this holds if and only if $\fle,\sle$ alternate in $\fwo$, which we had to prove.

Let $\fle \neq \sle$ and $i=j=2$. Observe that $f(\fwo)_{V_2} = p_{N_k \setminus \{1\}}(\fwo_2)$ and $g(\fwo)_{V_2} = p_{\{1,2\}}(\fwo_2)$, in which $\fwo_2$ is a copy of $\fwo$ in which every symbol is indexed by 2.
Now $\fle_2,\sle_2$ alternate in $\fwo'$ if and only if they alternate in $\fwo'_{V_2} = f(\fwo)_{V_2} g(\fwo)_{V_2} = p_{N_k \setminus \{1\}}(\fwo_2) p_{\{1,2\}}(\fwo_2)$, and by Lemma \ref{lem} for $A_1 = N_k \setminus \{1\}$ and $A_2 = \{1,2\}$ this holds if and only if $\fle,\sle$ alternate in $\fwo$, which we had to prove.

Let $\fle = \sle$ and $i \neq j$, say $i=1$, $j=2$. Then $\fle_1,\fle_2$ alternate in $\fwo'$ since $\fwo'_{\{\fle_1,\fle_2\}} = \fle_1 (\fle_2 \fle_1)^{k-1} \fle_2 \fle_1 \fle_2 = (\fle_1 \fle_2)^{k+1}$, which we had to prove.

Now, let $\fle \neq \sle$ and $i \neq j$, say $i=1$, $j=2$.

If $\fwo_{\{\fle,\sle\}} = (\fle\sle)^k$ then $f(\fwo)_{\{\fle_1,\sle_2\}}$ starts by $\fle_1 \fle_1$, so $\fle_1,\sle_2$ do not alternate in $\fwo' = f(\fwo) g(\fwo)$.

If $\fwo_{\{\fle,\sle\}} = (\sle\fle)^k$ then $g(\fwo)_{\{\fle_1,\sle_2\}} = \sle_2 \sle_2 \fle_1$, so $\fle_1,\sle_2$ do not alternate in $\fwo' = f(\fwo) g(\fwo)$.

In the remaining case $\fle,\sle$ do not alternate in $\fwo$, so $\fwo_{\{\fle,\sle\}}$ contains either $\fle\fle$ or $\sle\sle$. If it is $\fle\fle$, or it is $\sle\sle$ and $\fwo_{\{\fle,\sle\}}$ does not start in $\sle\sle$, then $f(\fwo)_{\{\fle_1,\sle_2\}}$ contains $\fle_1 \fle_1$. Otherwise $\fwo_{\{\fle,\sle\}}$  starts in $\sle\sle$, but then $g(\fwo)_{\{\fle_1,\sle_2\}} = \sle_2 \sle_2 \fle_1$. In all cases we conclude that
$\fle_1,\sle_2$ do not alternate in $\fwo' = f(\fwo) g(\fwo)$, concluding the proof.
\end{proof}


\section{Cubes and Prisms}
\label{seccubespr}
Theorem \ref{thm:main} has a couple of implications.
In particular, it implies that the $k$-cube $Q_k$ is $k$-representable, as is stated in the following theorem.

\begin{thm}
For every $k\ge 1$, the $k$-cube $Q_k$ is $k$-representable.
\end{thm}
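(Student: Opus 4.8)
The plan is to induct on $k$, using the inductive definition $Q_1 = K_2$ and $Q_k = \Cartesian{Q_{k-1}}{K_2}$ for $k>1$, with Theorem~\ref{thm:main} driving the inductive step. For the base case $k=1$, the graph $Q_1 = K_2$ is represented by the $1$-uniform word $12$, so $Q_1$ is $1$-representable.

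The naive inductive step would read: if $Q_{k-1}$ is $(k-1)$-representable with representant $\fwo$, then Theorem~\ref{thm:main} applied to $G = Q_{k-1}$ shows $Q_k = \Cartesian{Q_{k-1}}{K_2}$ is $((k-1)+1)=k$-representable via $\fwo' = f(\fwo)g(\fwo)$. However, Theorem~\ref{thm:main} carries the hypothesis that the factor $G$ has representation number $k>1$: the functions $f,g$ genuinely distinguish the occurrences $i=1$, $i=2$, and $2<i\le k$, so the construction needs each letter to occur at least twice in $\fwo$. Consequently the step is only legitimate when the representation number $k-1$ of $Q_{k-1}$ exceeds $1$, that is, for $k\ge 3$. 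This gap at the bottom of the induction is the main obstacle, and I expect it to be essentially the only real work; once it is bridged the induction runs automatically.

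To bridge it I would handle $Q_2$ directly. Note that $Q_2 = \Cartesian{K_2}{K_2}$ is the $4$-cycle: labelling its nodes $1,2,3,4$ around the cycle (edges $12,23,34,41$; non-edges $13,24$), one verifies that the $2$-uniform word $\fwo = 31241342$ represents it. Indeed, restricting to each adjacent pair gives an alternating factor ($1212$, $3232$, $3434$, $1414$ for $\{1,2\},\{2,3\},\{3,4\},\{1,4\}$ respectively), whereas restricting to $\{1,3\}$ gives $3113$ and to $\{2,4\}$ gives $2442$, neither of which alternates. Hence $Q_2$ is $2$-representable. (One could instead slightly extend Theorem~\ref{thm:main} to allow $k=1$, but checking a single explicit word is cleaner.)

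With both base cases $k=1$ and $k=2$ established, the induction closes: for every $k\ge 3$ the graph $Q_{k-1}$ is $(k-1)$-representable with $k-1\ge 2>1$, so Theorem~\ref{thm:main} supplies a $k$-uniform representant of $Q_k = \Cartesian{Q_{k-1}}{K_2}$. Therefore $Q_k$ is $k$-representable for every $k\ge 1$.
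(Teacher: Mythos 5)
Your proof is correct and follows essentially the same route as the paper: induction on $k$ with the base cases $k=1$ and $k=2$ verified by explicit words (the paper uses $31421324$ for $Q_2$, you use the equally valid $31241342$), and the inductive step for $k\ge 3$ supplied by Theorem~\ref{thm:main}. You also correctly identify why $Q_2$ must be handled separately, namely the hypothesis $k>1$ in Theorem~\ref{thm:main}, which the paper addresses in exactly the same way.
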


\begin{proof}
The proof is by induction on $k$.
For $k=1$ we observe that $Q_1 = K_2$, being 1-representable by the word $\fwo=12$.

For $k=2$ we observe that $Q_2$ is the 4-cycle being 2-representable by the word $\fwo=31421324$.

For the induction step for $k>2$, we use Theorem \ref{thm:main} giving a $k$-uniform representant for $Q_k$ from a $(k-1)$-uniform representant of $Q_{k-1}$.
\end{proof}

It was already known from \cite{art:kitpya} that every prism is 3-representable and that the 3-prism is not 2-representable.
Theorem \ref{thm:main} also implies that every prism is 3-representable, as a prism is the Cartesian product of a cycle-graph and $K_2$ and cycle-graphs are 2-representable (\cite{book:kitloz}).
From the fact that the 3-prism is not 2-representable and Theorem \ref{thm:main} we can prove the following.

\begin{thm}
The Cartesian product $\Cartesian{K_n}{K_2}$ has representation number $n$ for $n=1,2,3$, and representation number 3 for all $n>3$.
\end{thm}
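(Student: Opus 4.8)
The plan is to determine the representation number of $\Cartesian{K_n}{K_2}$ by separately establishing a matching upper and lower bound, pinning down the three small values $n=1,2,3$ by inspection and then covering all $n\ge 3$ with one uniform argument. Two structural observations drive everything. First, $\Cartesian{K_1}{K_2}=K_2$ and $\Cartesian{K_2}{K_2}=Q_2$, the $4$-cycle. Second, for every $n\ge 3$ the graph $\Cartesian{K_n}{K_2}$ contains the $3$-prism $\Cartesian{K_3}{K_2}$ as an \emph{induced} subgraph: keeping the six nodes $\fle_i$ with $\fle$ ranging over three fixed nodes of $K_n$ and $i\in\{1,2\}$, the surviving edges are exactly those of $\Cartesian{K_3}{K_2}$, since an edge $\fle_i\sle_j$ requires either $i=j$ with $\fle\neq\sle$ or $i\neq j$ with $\fle=\sle$.

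For the upper bound, I would first record that $K_n$ is represented by the $1$-uniform word $12\cdots n$, and hence also by the $2$-uniform word $(12\cdots n)(12\cdots n)$, so that $K_n$ is $2$-representable. Applying Theorem \ref{thm:main} with $G=K_n$ and $k=2$ then produces a $3$-uniform word representing $\Cartesian{K_n}{K_2}$, so the representation number is at most $3$ for every $n\ge 1$. This bound is sharpened in the two smallest cases: $\Cartesian{K_1}{K_2}=K_2$ is a complete graph, hence $1$-representable, and $\Cartesian{K_2}{K_2}=Q_2$ is a cycle, hence $2$-representable.

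For the lower bound I would invoke the behaviour of the representation number under induced subgraphs. If $H$ is the subgraph of $G$ induced on a vertex set $U$ and $\fwo$ is a $k$-uniform word representing $G$, then the restriction $\fwo_U$ is again $k$-uniform and represents $H$, because $(\fwo_U)_{\{\fle,\sle\}}=\fwo_{\{\fle,\sle\}}$ for all $\fle,\sle\in U$ while $H$ has precisely the edges of $G$ lying inside $U$; thus the representation number never increases on passing to an induced subgraph. Since the $3$-prism $\Cartesian{K_3}{K_2}$ is not $2$-representable and sits as an induced subgraph inside $\Cartesian{K_n}{K_2}$ for every $n\ge 3$, none of these graphs is $2$-representable, so each has representation number at least $3$; together with the upper bound this gives exactly $3$ for all $n\ge 3$. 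Finally $Q_2$ is not complete, hence not $1$-representable, so its representation number is at least $2$ and therefore equal to $2$, while $K_2$ is complete with representation number $1$.

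The step I expect to be the main obstacle is the lower bound for large $n$, where one must argue that the representation number stays at $3$ rather than growing with $n$. The leverage comes entirely from the externally given fact that the $3$-prism is not $2$-representable, so the crux is to recognise this prism as an induced subgraph of every $\Cartesian{K_n}{K_2}$ with $n\ge 3$ and to combine it with the induced-subgraph monotonicity of the representation number. The upper bound, by contrast, is immediate once $K_n$ is seen to be $2$-representable, after which Theorem \ref{thm:main} does the work.
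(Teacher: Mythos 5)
Your proposal is correct and follows essentially the same route as the paper: small cases $n=1,2,3$ by inspection together with the known facts about $K_2$, the $4$-cycle, and the $3$-prism; the upper bound for all $n$ via the $2$-uniform word $12\cdots n\,12\cdots n$ fed into Theorem \ref{thm:main}; and the lower bound for $n>3$ from the $3$-prism sitting as an induced subgraph. The only difference is cosmetic: you spell out the induced-subgraph monotonicity of the representation number (via $\fwo_U$), which the paper uses implicitly, and that justification is itself correct.
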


\begin{proof}
$\Cartesian{K_1}{K_2}$ is equal to $K_2$ having representation number 1.

$\Cartesian{K_2}{K_2}$ is the 4-cycle, which is known to have representation number 2 (\cite{book:kitloz}).

$\Cartesian{K_3}{K_2}$ is equal to the 3-prism, which is not 2-representable, but it is 3-representable (\cite{book:kitloz}).
If $n>3$ then $\Cartesian{K_n}{K_2}$ contains the 3-prism as induced subgraph and thus cannot be 2-representable.
Theorem \ref{thm:main} gives a 3-representation because $K_n$ has a 2-representation $12\cdots n12\cdots n$, so $\Cartesian{K_n}{K_2}$ has representation number 3.
\end{proof}

In particular, this theorem shows that the requirement $k>1$ in Theorem \ref{thm:main} is essential: for $k=1$ the claim of Theorem \ref{thm:main} does not hold since it would yield a non-existent 2-representation of $\Cartesian{K_n}{K_2}$ for $n > 2$.

\section{Extension to $K_n$}
\label{secext}
The ideas used in Theorem \ref{thm:main} can be applied to prove the following generalization.

The nodes of $\Cartesian{G}{K_n}$ are denoted by $\fle_1,\fle_2, \hdots \fle_n$ for $\fle$ running over the nodes of $G$; two nodes $\fle_i$, $\sle_j$ are connected by an edge in $\Cartesian{G}{K_2}$ if and only if
\begin{itemize}
\item $i=j$ and $(\fle,\sle)$ is an edge in $G$, or
\item $i\neq j$ and $\fle=\sle$.
\end{itemize}
Write $V_i$ for the set of nodes $\fle_i$, so $V_1 \cup V_2 \cup \cdots \cup V_n$ is the set of nodes of $\Cartesian{G}{K_n}$.

\begin{thm}\label{thm:extended}
Let $G$ be a $k$-representable graph for $k>1$ and let $\fwo$ be a $k$-representant of $G$.
Then the graph $\Cartesian{G}{K_n}$ is $(k+n-1)$-representable with representant $\fwo' = f_n(\fwo) f_{n-1}(\fwo) \cdots f_1(\fwo)$ for the occurrence based functions $f_i$ defined by
\[
f_1(\fle,i) = \left\{
\begin{array}{ll}
  \fle_1 & \mbox{if $i = 1$} \\
  \fle_n \fle_{n-1} \hdots \fle_1 & \mbox{if $1 < i \leq k$}
\end{array} \right.
\]
and
\[f_j(\fle,i) = \left\{
\begin{array}{ll}
  \fle_j & \mbox{if $i = 1$ }\\
  \fle_{j-1} \hdots \fle_1 \fle_n \hdots \fle_j & \mbox{if $i = 2$} \\
  \epsilon & \mbox{if $2 < i \leq k$}
\end{array} \right.
\]
for $j=2,\ldots,n$.
\end{thm}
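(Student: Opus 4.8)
The plan is to follow the structure of the proof of Theorem~\ref{thm:main}, treating each type of potential edge of $\Cartesian{G}{K_n}$ separately. First I would check uniformity by a direct count: in $\fwo'$ each letter $\fle_1$ occurs $k$ times in $f_1(\fwo)$ and once in each $f_j(\fwo)$ with $j\ge 2$, while for $m\ge 2$ the letter $\fle_m$ occurs $k-1$ times in $f_1(\fwo)$, twice in $f_m(\fwo)$, and once in every other $f_j(\fwo)$; in both cases the total is $k+n-1$, so $\fwo'$ is $(k+n-1)$-uniform. Throughout I write $\fwo_m$ for the copy of $\fwo$ in which every letter carries the index $m$, and I record that for $m\ge 2$ only the first two occurrences of each symbol contribute to $f_m(\fwo)$, the image of a second occurrence being the string $S^{\fle}_m=\fle_{m-1}\cdots\fle_1\fle_n\cdots\fle_m$, which is a permutation of $\fle_1,\ldots,\fle_n$.

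For the intra-layer edges ($\fle\neq\sle$, $i=j$) I would project onto $V_i$. As in Theorem~\ref{thm:main}, each $f_m(\fwo)_{V_i}$ equals $p_A(\fwo_i)$ for an explicit $A$: one reads off $A=\{1,2\}$ for $m=i$, $A=\{2\}$ for $m\ge 2$ with $m\neq i$, and $A=N_k\setminus\{1\}$ for $m=1$ (respectively $A=N_k$ when $i=1$). Concatenating, $\fwo'_{V_i}$ is a product of such $p_A(\fwo_i)$, and the family of sets involved contains $\{1,2\}$ together with $N_k\setminus\{1\}$ (or $N_k$), so every consecutive pair $\{t,t+1\}$ is covered. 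Lemma~\ref{lem} then gives that $\fle_i,\sle_i$ alternate in $\fwo'$ if and only if $\fle,\sle$ alternate in $\fwo$.

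For the vertical edges ($\fle=\sle$, $i\neq j$, say $i<j$) I would compute $\fwo'_{\{\fle_i,\fle_j\}}$ block by block, reading each $f_m(\fwo)_{\{\fle_i,\fle_j\}}$ off from $S^{\fle}_m$: the blocks with $m>j$ and with $m<i$ contribute $\fle_j\fle_i$, the blocks with $i<m<j$ contribute $\fle_i\fle_j$, the block $m=j$ contributes $\fle_j\fle_i\fle_j$, the block $m=i$ contributes $\fle_i\fle_j\fle_i$, and $f_1$ contributes $(\fle_j\fle_i)^{k-1}$ (or $\fle_1(\fle_j\fle_1)^{k-1}$ when $i=1$). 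The point is that these pieces meet with matching endpoints, so the concatenation telescopes to $(\fle_j\fle_i)^{k+n-1}$; hence $\fle_i,\fle_j$ alternate, as required.

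Finally, for the non-edges ($\fle\neq\sle$, $i\neq j$, and by symmetry of alternation we may assume $i<j$) I would exhibit a repeated letter in $\fwo'_{\{\fle_i,\sle_j\}}$. If $\fwo_{\{\fle,\sle\}}=(\fle\sle)^k$, then block $f_i$ produces $\fle_i\fle_i\sle_j$ (the leading $\fle_i$ from the first occurrence of $\fle$, the second from $S^{\fle}_i$, with only vanishing material in between); when $i=1$ the same doubling $\fle_1\fle_1$ already appears inside $f_1(\fwo)$. If $\fwo_{\{\fle,\sle\}}=(\sle\fle)^k$, then symmetrically $f_j$ produces $\sle_j\sle_j\fle_i$. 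In either case $\fle_i,\sle_j$ do not alternate. I expect the remaining subcase---where $\fle,\sle$ do not alternate in $\fwo$---to be the main obstacle. Here I would use that $f_1(\fwo)$ reproduces $\fwo_{\{\fle,\sle\}}$ up to deleting the first occurrence of $\sle$ (and, when $i>1$, of $\fle$): a doubling $\fle\fle$ or $\sle\sle$ in $\fwo$ not sitting at this deleted prefix survives into $f_1(\fwo)$, whereas a doubling destroyed by the deletion is instead caught by the dedicated block $f_i$ or $f_j$, exactly as in the final paragraph of the proof of Theorem~\ref{thm:main}. The delicate part is enumerating how $\fwo_{\{\fle,\sle\}}$ can begin so as to guarantee that one of these two mechanisms always applies.
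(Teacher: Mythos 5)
Your proposal is correct and takes essentially the same route as the paper's own proof: the same uniformity count, the same reduction of the intra-layer cases to Lemma~\ref{lem} via explicit $p_A$-decompositions of the $V_i$-projections, the same telescoping block-by-block computation for the case $\fle=\sle$, and the same doubled-letter arguments in the blocks $f_1$, $f_i$, $f_j$ for the non-edges. In fact you are slightly more precise than the paper in two spots: you correctly record $f_m(\fwo)_{V_i}=p_{\{2\}}(\fwo_i)$ for $m\ge 2$, $m\neq i$ (the paper writes $p_{\{1,2\}}(\fwo_i)$ for all $m>1$, which cannot be literally right, as it would make $\fwo'_{V_i}$ $(k+2n-3)$-uniform; the application of Lemma~\ref{lem} is unaffected), and the ``delicate part'' you flag in the last case closes immediately with your classification --- a doubling at occurrences $(1,2)$ of $\sle$ is caught by $f_j$, one at occurrences $(1,2)$ of $\fle$ with $i>1$ by $f_i$, and every other doubling survives the deletion and appears in $f_1(\fwo)_{\{\fle_i,\sle_j\}}$ --- which is actually more robust than the paper's branching on whether $\fwo_{\{\fle,\sle\}}$ starts in $\sle\sle$ (for example, for $\fwo_{\{\fle,\sle\}}=\fle\sle\sle\fle\sle\fle$ the paper's first branch applies but its stated conclusion about $f_1$ fails, while your $f_j$-mechanism catches the pair).
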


\begin{proof}
For every $\fle$ the word $f_1(\fwo)$ contains $k$ copies of $\fle_1$ and $k-1$ copies of $\fle_i$ for $i>2$, and the words $f_j(\fwo)$ contain 2 copies of $\fle_j$ and 1 copy of $\fle_i$ for $i \neq j$.
So for every $i$, $\fle_i$ occurs either $k+(n-1)$ times if $i=1$, or $(k-1)+2+(n-2)=k+(n-1)$ times if $i \neq 1$.
So $\fwo'$ is $(k+(n-1))$-uniform.

We have to prove that $\fle_i, \sle_j$ alternate in $\fwo'$ for $\fle_i \neq \sle_j$ if and only if $(\fle_i, \sle_j)$ is an edge in $\Cartesian{G}{K_n}$, for $i,j = 1,2,\hdots,n$, more precisely:
\begin{itemize}
\item if $x \neq y$ and $i=j$ then $\fle_i, \sle_j$ alternate in $\fwo'$ if and only if $\fle,\sle$ alternate in $\fwo$,
\item if $x = y$ and $i\neq j$ then $\fle_i, \sle_j$ do alternate in $\fwo'$, and
\item if $x \neq y$ and $i\neq j$ then $\fle_i, \sle_j$ do not alternate in $\fwo'$.
\end{itemize}
We do this by considering all cases.

Let $\fle \neq \sle$ and $i=j=1$.
Observe that $f_1(\fwo)_{V_1} = \fwo_1$ and $f_i(\fwo)_{V_1} = p_2(\fwo_1)$ for all $i>1$, in which $\fwo_1$ is a copy of $\fwo$ in which every symbol is indexed by 1.
Now $\fle_1,\sle_1$ alternate in $\fwo'$ if and only if they alternate in $\fwo'_{V_1} = f_n(\fwo)_{V_1} f_{n-1}(\fwo)_{V_1}\hdots f_1(\fwo)_{V_1} =  (p_2(\fwo_1))^{n-1} \fwo_1$, and by Lemma \ref{lem} for $A_1 = A_2 = \hdots = A_{n-1} = \{2\}$ and $A_n = N_k$, with $N_k=\{1,\hdots,k\}$, this holds if and only if $\fle,\sle$ alternate in $\fwo$, which we had to prove.

Let $\fle \neq \sle$ and $i=j \geq 2$.
Observe that $f_1(\fwo)_{V_i} = p_{N_k \setminus \{1\}}(\fwo_i)$ and $f_l(\fwo)_{V_i} = p_{\{1,2\}}(\fwo_i)$ for all $l>1$, in which $\fwo_i$ is a copy of $\fwo$ in which every symbol is indexed by $i$.
Now $\fle_i,\sle_i$ alternate in $\fwo'$ if and only if they alternate in $\fwo'_{V_i} = f_n(\fwo)_{V_i} f_{n-1}(\fwo)_{V_2}\hdots f_1(\fwo)_{V_i} = (p_{\{1,2\}}(\fwo_i))^{n-1} p_{N_k \setminus \{1\}}(\fwo_i)$, and by Lemma \ref{lem} for $A_1 = A_2 = \hdots = A_{n-1} = \{1,2\}$ and $A_n = N_k \setminus \{1\}$ this holds if and only if $\fle,\sle$ alternate in $\fwo$, which we had to prove.

Let $\fle = \sle$, $i = 1$ and $i < j$.
Then $\fle_1,\fle_j$ alternate in $\fwo'$ since $\fwo'_{\{\fle_1,\fle_j\}} = (\fle_j \fle_1)^{n-j} (\fle_j\fle_1\fle_j) (\fle_1\fle_j)^{j-2} (\fle_1 (\fle_j \fle_1)^{k-1})= (\fle_j \fle_1)^{k+(n-1)}$, which we had to prove.

Let $\fle = \sle$, $1 \neq i < j$.
Then $\fle_i,\fle_j$ alternate in $\fwo'$ since $\fwo'_{\{\fle_i,\fle_j\}} = (\fle_j \fle_i)^{n-j} (\fle_j\fle_i\fle_j) (\fle_i\fle_j)^{j-i-1} (\fle_i\fle_j\fle_i)(\fle_j\fle_i)^{i-2} (\fle_j \fle_i)^{k-1}= (\fle_j \fle_i)^{k+(n-1)}$, which we had to prove.

It remains to consider $\fle \neq \sle$ and $i \neq j$; without loss of generality assume $i < j$. We continue by case analysis on the shape of $\fwo_{\{\fle,\sle\}}$.

If $\fwo_{\{\fle,\sle\}} = (\fle\sle)^k$ and $i=1$, then $f_1(\fwo)_{\{\fle_1,\sle_j\}}$ starts by $\fle_1 \fle_1$, so $\fle_1,\sle_j$ do not alternate in $\fwo'$.

If $\fwo_{\{\fle,\sle\}} = (\fle\sle)^k$ and $i>1$, then $f_i(\fwo)_{\{\fle_i,\sle_j\}} = \fle_i \fle_i \sle_j$, so $\fle_i,\sle_j$ do not alternate in $\fwo'$.

If $\fwo_{\{\fle,\sle\}} = (\sle\fle)^k$ then $f_j(\fwo)_{\{\fle_i,\sle_j\}} = \sle_j \sle_j \fle_i$, so $\fle_i,\sle_j$ do not alternate in $\fwo'$.

In the remaining case $\fle,\sle$ do not alternate in $\fwo$, so $\fwo_{\{\fle,\sle\}}$ contains either $\fle\fle$ or $\sle\sle$.
If it is $\fle\fle$, or it is $\sle\sle$ and $\fwo_{\{\fle,\sle\}}$ does not start in $\sle\sle$, then $f_1(\fwo)_{\{\fle_1,\sle_j\}}$ contains $\fle_1 \fle_1$ for all $j>1$ and $f_i(\fwo)_{\{\fle_i,\sle_j\}} = \fle_i \fle_i \sle_j$ for all $1 \neq i < j$.
Otherwise $\fwo_{\{\fle,\sle\}}$  starts in $\sle\sle$, but then $f_j(\fwo)_{\{\fle_i,\sle_j\}} = \sle_j \sle_j \fle_i$ for all $1 \neq i < j$.
In all cases we conclude that $\fle_i,\sle_j$ do not alternate in $\fwo'$, concluding the proof.
\end{proof}

\section{Conclusions}
We introduced occurrence based functions as a building block when given a word representing a graph, to construct a word representing a modified graph based on the given word.
Exploiting the key lemma (Lemma \ref{lem}) we succeeded in doing so for taking the Cartesian product with $K_n$. Doing this only for the Cartesian product with $K_2$, as a consequence
we have a construction for a $k$-uniform word representing the $k$-cube $Q_k$. Hence, the representation number of $Q_k$ is at most $k$. It remains open whether the representation number of $Q_k$ is exactly $k$, for that one should prove that for $m<k$ no $m$-uniform word exists representing $Q_k$.

When considering our approach combining occurrence based functions and concatenation, we see that several (but not all) constructions in \cite{book:kitloz} giving word representations
of particular graphs could be presented in the same format. It would make sense to investigate which building blocks are used in which constructions.

Our Theorem \ref{thm:extended} is expected to have further generalizations, as will be elaborated in the master thesis \cite{mth:Broere} of the first author.
\label{secconcl}


\biblio{refrepgr}

\end{document}